\newtheorem{theorem}{Theorem}
\newtheorem{definition}{Definition}
\newtheorem{lemma}[theorem]{Lemma}
\newtheorem{corollary}[theorem]{Corollary}
\newtheorem{question}{Question}
\DeclareMathOperator{\conv}{conv}
\DeclareMathOperator{\sgn}{sgn}
\def\GG{{\cal G}}
\def\HH{{\cal H}}
\def\NN{{\mathbb N}}
\author
{
Manfred Scheucher
\and
Hendrik Schrezenmaier
\and
Raphael Steiner
}
\date{}
\title{A Note On Universal Point Sets for Planar Graphs\footnote{
An extended abstract of this work 
was presented at the 35th European Workshop on Computational Geometry (EuroCG'19) \cite{3S_eurocg_version}.
A short version 
is to appear in the Proc.\ of the 27th International Symposium on
Graph Drawing and Network Visualization (GD'19) \cite{3S_gd_version}.
Earlier versions of this paper (EuroCG 2019; arXiv versions 1 and 2) contained a flaw, 
which has been corrected. 
For more details, see Section~\ref{sec:thebug}.
}}
\begin{document}
\maketitle

\begin{center}
{\footnotesize
Institut f\"ur Mathematik, Technische Universit\"at Berlin, Germany,\\
\texttt{\{scheucher,schrezen,steiner\}@math.tu-berlin.de}
}
\end{center}

\vspace{0.5cm}

\begin{abstract}
We investigate which planar point sets allow 
simultaneous straight-line embeddings of all planar graphs on a fixed number of vertices.
We first show that at least $(1.293-o(1))n$ points are required 
to find a straight-line drawing of each $n$-vertex planar graph 
(vertices are drawn as the given points);
this improves the previous best constant $1.235$ by Kurowski (2004).

Our second main result is based on exhaustive computer search:
We show that no set of 11 points exists, on which
all planar 11-vertex graphs can be simultaneously drawn plane straight-line.
This strengthens the result by Cardinal, Hoffmann, and Kusters (2015),
that all planar graphs on $n \le 10$ vertices can be 
simultaneously drawn on particular \emph{$n$-universal} sets of $n$ points 
while there are no $n$-universal sets of size $n$ for $n \ge 15$.
We also provide 49 planar 11-vertex graphs 
which cannot be simultaneously drawn on any set of 11 points.
This, in fact, is another step towards a (negative) answer of the question,
whether every two planar graphs can be drawn simultaneously -- a question raised by
Brass, Cenek, Duncan, Efrat, Erten, Ismailescu, Kobourov, Lubiw, and Mitchell (2007).
\end{abstract}

\section{Introduction}
\label{sec:intro}

A point set $S$ in the Euclidean plane is called
\emph{$n$-universal for} a family $\GG$ of planar $n$-vertex graphs 
if every graph $G$ from $\GG$ admits a 
plane straight-line embedding
such that the vertices are drawn as points from~$S$.
A point set, which is $n$-universal for the family of all planar graphs, is simply called \emph{$n$-universal}.
We denote 
by $f_p(n)$ the size of a minimal $n$-universal set (for planar graphs), and
by $f_s(n)$ the size of a minimal $n$-universal set for stacked triangulations, 
where stacked triangulations (a.k.a.\ planar 3-trees) are defined as follows:
\begin{definition}[Stacked Triangulations]
Starting from a triangle, one may obtain any stacked triangulation 
by repeatedly inserting a new vertex inside a face (including the outer face) of the current triangulation 
and making it adjacent to all the three vertices contained in the face.
\end{definition}
An example of a stacked triangulation is shown in Figure~\ref{fig:stacked_triang_example}.

\begin{figure}[htb]
\centering
\includegraphics[scale=0.75]{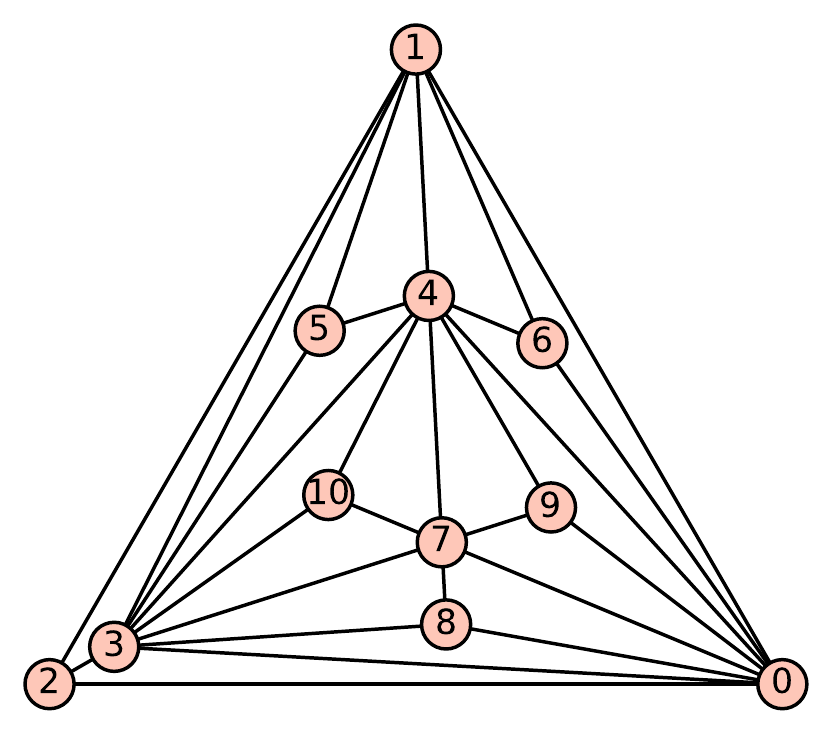}
\caption{A (labeled) stacked triangulation on~$11$ vertices
in which every face is incident to a degree-3-vertex.}
\label{fig:stacked_triang_example}
\end{figure}

\medskip

De~Fraysseix, Pach, and Pollack \cite{DeFraysseixPachPollack1990} showed
that every planar $n$-vertex graph  
admits a straight-line embedding on a $(2n-4) \times (n-2)$ grid 
-- even if the combinatorial embedding (including the choice of the outer face) is prescribed. Moreover, the graphs are only embedded on a triangular subset of the grid.
Hence, $f_p(n) \le n^2-O(n)$.
This bound was further improved to the currently best known bound $f_p(n) \leq \frac{n^2}{4}-O(n)$ \cite{BannisterCDE2014} 
(see also \cite{Schnyder1990,Brandenburg2008}).
Also various subclasses of planar graphs have been studied intensively:
Any stacked triangulation on $n$~vertices (with a fixed outer face) 
can be drawn on a particular set of $f_s(n) \le O(n^{3/2} \log n)$ points \cite{FulekToth2013}.
For outerplanar graphs, it is known that any set of $n$ points in general position is $n$-universal \cite{PachGritzmannMoharPollack1991,CastanedaUrrutia1996}.
An upper bound of $O(n \log n)$ is known for 2-outerplanar graphs and for simply nested graphs,
and an $O(n \cdot \polylog (n))$ bound is known for graphs of bounded pathwidth~\cite{Angelini2018,BannisterCDE2014}.

\smallskip
Concerning the lower bound on $f_p(n)$ and $f_s(n)$, respectively, 
the obvious relation $n \le f_s(n) \le f_p(n)$ holds for any $n \in \NN$.
The first non-trivial lower bound on the size of $n$-universal sets was also given 
by de~Fraysseix, Pach, and Pollack \cite{DeFraysseixPachPollack1990}, who showed a lower bound of $f_p(n) \ge n+(1-o(1))\sqrt{n}$.
Chrobak and Karloff~\cite{Chrobak1989} further improved the lower bound to $(1.098-o(1))n$, 
and the multiplicative constant was later on improved to $1.235$ by Kurowski \cite{Kurowski2004}.
In fact, Kurowski's lower bound even applies to~$f_s(n)$.

Cardinal, Hoffmann, and Kusters \cite{CardinalHoffmannKusters2015} 
showed that $n$-universal sets of size $n$ exist for every $n \le 10$,
whereas for $n \ge 15$ no such set exists -- not even for stacked triangulations.
Hence $f_p(n) = f_s(n) = n$ for $n \le 10 $
and $f_p(n) \ge f_s(n) > n$  for  $n \geq 15$.
Moreover, they found a collection of 7,393 planar graphs on $n=35$ vertices 
which cannot be simultaneously drawn straight-line on a common set of $35$ points.
We call such a collection of graphs a \emph{conflict collection}.
This was a first big step towards an answer to the question
by Brass and others \cite{BrassCDEEIKLM2007}:
\begin{question}[\cite{BrassCDEEIKLM2007}]
\label{question:conflict_collection_size2}
Is there a conflict collection of size~2?
\end{question}

\section{Outline}

Our first result is the following theorem, 
which further improves the lower bound on $f_s(n)$.
We present its proof in Section~\ref{sec:proof_theorem_lower129}.

\begin{theorem}\label{thm:lower129}
It holds that $f_s(n) \ge (\alpha-o(1))n$, where $\alpha=1.293\ldots$ is 
the unique real-valued solution of the equation ${\alpha^\alpha} \cdot {(\alpha-1)^{1-\alpha}}=2$.
\end{theorem}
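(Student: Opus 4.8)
The plan is to run a counting argument comparing the number of distinct stacked triangulations on $n$ vertices against the number that a single point set of size $N$ can possibly host, and then to optimize the resulting inequality via Stirling's formula. So assume $S$ is $n$-universal for stacked triangulations with $|S|=N$. Since every stacked triangulation is an isomorphism type that must be realized as a straight-line drawing on some $n$-subset of $S$, I obtain the master inequality
\[
  U_n \;\le\; \binom{N}{n}\, M_n ,
\]
where $U_n$ is the number of (abstract) stacked triangulations on $n$ vertices and $M_n := \max_X D(X)$ is the maximum, over all $n$-point configurations $X$, of the number $D(X)$ of \emph{distinct} stacked triangulations admitting a plane straight-line drawing on $X$. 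This is just a union bound: assign each of the $U_n$ types to one hosting $n$-subset; there are at most $\binom{N}{n}$ subsets, and each absorbs at most $M_n$ types. A structural fact that makes the recursion below available is that in any straight-line drawing of a triangulation the outer boundary is the convex hull, so the $n$ used points always have a triangular hull, with one apex vertex and three recursively filled sub-triangles.

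First I would pin down the growth of $U_n$. Rooting a stacked triangulation at its outer triangle, the description ``a rooted stacked triangulation is either empty or an apex vertex together with three independent rooted stacked triangulations filling the three created sub-triangles'' translates into the ternary-tree generating function $T = 1 + x\,T^3$, whose coefficients grow like $(27/4)^n$. Since rooting and the choice of the outer face cost only a polynomial factor, this yields the lower bound $U_n \ge (27/4 - o(1))^n$.

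The main obstacle — and essentially the whole content of the theorem — is the matching upper bound $M_n \le (27/8 + o(1))^n$, i.e.\ that a single $n$-point set hosts at most a $2^{-n}$ fraction of the ``free'' ternary-tree count of distinct stacked triangulations. The intuition is that on a fixed configuration the three sub-triangles created by an apex, and the partition of the remaining points among them, are \emph{geometrically determined} rather than freely choosable, and it is this determinacy that should cost a factor $2$ per vertex, replacing the branching rate $27/4$ by $27/8$; formally, inserting a factor $\tfrac12$ into the cubic recursion, $T = 1 + \tfrac{x}{2}\,T^3$, rescales the growth rate from $27/4$ to $27/8$. The subtlety one must confront is that $M_n$ cannot be controlled by the naive count of \emph{drawings}: degenerate configurations (e.g.\ nearly collinear, clustered interior points) admit super-exponentially many straight-line drawings, yet realize very few distinct abstract types. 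The argument must therefore count distinct isomorphism classes and show that such degenerate configurations are poor at producing diverse types — entirely consistent with the theorem of Cardinal, Hoffmann, and Kusters that no single $n$-point set is universal for $n \ge 15$. Obtaining a bound that is uniform over \emph{all} configurations, degenerate ones included, is where the real work lies.

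Finally I would combine the two estimates. The master inequality gives $\binom{N}{n} \ge U_n/M_n \ge 2^{(1-o(1))n}$. Writing $N=\beta n$ and using $\log_2\binom{\beta n}{n} = \big(\beta\log_2\beta-(\beta-1)\log_2(\beta-1)\big)\,n + o(n)$, this forces $\beta\log_2\beta-(\beta-1)\log_2(\beta-1)\ge 1-o(1)$. Since the left-hand side is increasing in $\beta$ for $\beta>1$, the value of $\beta$ is pushed above the threshold $\alpha$ at which equality holds, namely the solution of $\alpha\log_2\alpha-(\alpha-1)\log_2(\alpha-1)=1$; exponentiating turns this into $\alpha^\alpha(\alpha-1)^{1-\alpha}=2$. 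Hence $N \ge (\alpha-o(1))n$, which is exactly the claimed bound. Beyond the geometric lemma, the only steps needing care are the bookkeeping of the $o(1)$ terms through Stirling's approximation and the polynomial-factor passage from rooted to abstract types.
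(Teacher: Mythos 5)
Your endgame is fine: the master inequality $U_n \le \binom{N}{n}M_n$, the Stirling estimate $\log_2\binom{\beta n}{n}=\bigl(\beta\log_2\beta-(\beta-1)\log_2(\beta-1)\bigr)n+o(n)$, the monotonicity in $\beta$, and the identification of the threshold $\alpha^\alpha(\alpha-1)^{1-\alpha}=2$ all check out, and they coincide with the final steps of the paper's proof. The problem is that the proposal stops exactly at the step you yourself flag as ``the main obstacle'': the uniform bound $M_n\le(27/8+o(1))^n$ on the number of \emph{distinct isomorphism types} of stacked triangulations realizable on a single $n$-point configuration. You offer no proof of it, only the heuristic that geometric determinacy ``should cost a factor $2$ per vertex'' (inserting $\tfrac12$ into $T=1+xT^3$ is numerology, not an argument). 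Since the ratio $U_n/M_n\ge 2^{(1-o(1))n}$ is the entire content of the theorem in your formulation, what you have is a reduction of the theorem to an unproven---and to my knowledge unestablished---geometric lemma, not a proof.

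The paper sidesteps this obstacle by a different decomposition: it counts \emph{labeled} objects. Following Cardinal, Hoffmann, and Kusters, it considers the family $\mathcal{T}_n$ of stacked triangulations built by stacking $v_5,\dots,v_n$ in label order; this family has exactly $2^{n-4}(n-3)!$ members, and the key lemma (an easy induction: the position of $\pi(v_n)$ relative to the drawing of $T-v_n$ determines the face into which $v_n$ was stacked) says that each injection $\pi\colon V_n\to P$ is a plane drawing of \emph{at most one} member of $\mathcal{T}_n$. Hence $2^{n-4}(n-3)!\le m!/(m-n)!$, i.e.\ $\binom{m}{n}\ge 2^n/\bigl(16n(n-1)(n-2)\bigr)$, and Stirling finishes as in your last paragraph. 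Note that this labeled lemma does \emph{not} imply your unlabeled bound on $M_n$: writing $L(G)$ for the number of labelings of a type $G$ that lie in $\mathcal{T}_n$, the lemma gives $\sum_{G\ \mathrm{realizable\ on}\ Q}L(G)\le n!$ for any $n$-point set $Q$, but $L(G)$ can be as small as $O(1)$ (e.g.\ the path-like stacked triangulation in which each vertex is stacked into a face created by the preceding one has an essentially forced stacking order), so one only deduces $M_n\le n!$, which is useless. Your per-configuration approach would therefore need a genuinely new argument controlling types with few admissible labelings, whereas the per-injection approach makes the required uniqueness statement trivially strong (at most one). If you want to repair your write-up with minimal change, replace ``each $n$-subset absorbs at most $M_n$ types'' by ``each injection absorbs at most one labeled member of $\mathcal{T}_n$'' and rerun your own Stirling computation; the constant $2$ then comes from $|\mathcal{T}_n|/n!\approx 2^n$ rather than from the conjectural ratio $(27/4)/(27/8)$.
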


In Section~\ref{sec:proof_conflict_11} we present our second result,
which is another step towards a (negative) answer of Question~\ref{question:conflict_collection_size2}
and strengthens the results from \cite{CardinalHoffmannKusters2015}.
Its proof is based on exhaustive computer search.

\begin{theorem}[Computer-assisted]
\label{thm:conflict_11}
There is a conflict collection consisting 
of 49 stacked triangulations on 11 vertices.
Furthermore, there is no conflict collection consisting 
of 36 triangulations on 11 vertices. 
\end{theorem}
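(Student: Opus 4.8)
The plan is to reduce both statements to a finite, combinatorial question about \emph{order types} of $11$ points and then attack it by exhaustive search together with a duality certificate. Recall that whether a fixed planar graph admits a straight-line drawing with its vertices placed at a prescribed point set depends only on the order type (the chirotope) of the point set, not on the exact coordinates: two point sets of the same order type admit exactly the same straight-line embeddings. Thus a family $\mathcal{T}$ of triangulations is a conflict collection if and only if for \emph{every} order type $O$ of $11$ points at least one $T \in \mathcal{T}$ fails to embed on $O$. Writing $B(O)$ for the set of triangulations not embeddable on $O$, a conflict collection is exactly a transversal (hitting set) of the set system $\{B(O)\}_O$, and the minimum size of a conflict collection is the minimum size of such a transversal. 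Crucially --- and this is where earlier versions went wrong --- the point set realizing a simultaneous drawing need not be in general position: collinear points are admissible as long as no vertex lies in the relative interior of a non-incident edge. Hence the enumeration must range over \emph{all} order types of $11$ points, including the degenerate ones with collinear triples, not merely the simple (general-position) ones.

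Next I would set up the embeddability test. Given an order type $O$ and a triangulation $T$, I decide embeddability purely combinatorially from the chirotope. For the \emph{stacked} triangulations used in the first statement this test is especially clean: the recursive construction (a vertex stacked into a triangular face) means that an embedding corresponds to a containment-respecting injection of the stacking tree into the point set --- fix three points as the outer triangle, then each stacked vertex must be mapped to a point lying inside its current triangle, and one recurses on the three resulting sub-triangles. This laminar containment structure lets me test a single pair $(O,T)$ efficiently and, with memoization over the order type, to compute the whole set $B(O)$ quickly.

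For the upper bound (a conflict collection of $49$), the strategy is two-phase. First, heuristically solve the hitting-set problem over the ground set of stacked triangulations on $11$ vertices to produce a small candidate collection; the output has $49$ graphs. Second, \emph{certify} it by the exhaustive step: iterate over all order types of $11$ points (with degeneracies) and confirm that each one fails at least one of the $49$ graphs, with early termination as soon as a failing graph is found. For the lower bound (no conflict collection of $36$), I would produce a duality certificate rather than search all $36$-subsets. By LP duality the minimum transversal is bounded below by the value of any fractional matching $y \ge 0$ on order types satisfying $\sum_{O\,:\,T \in B(O)} y_O \le 1$ for every triangulation $T$; any such $y$ with $\sum_O y_O > 36$ proves the claim. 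The cleanest realization is a pigeonhole certificate: exhibit a family $O_1,\dots,O_m$ of $11$-point order types and a bound $k$ such that every triangulation on $11$ vertices is non-embeddable on at most $k$ of them, which forces the minimum transversal to be at least $m/k$; choosing the $O_i$ so that $m/k > 36$ (for instance $m=37$, $k=1$, i.e.\ every triangulation embeds on all but at most one of $37$ chosen point sets) yields the result, since any $36$ triangulations then jointly avoid at most $36$ of the point sets and hence share a common host.

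The main obstacle is twofold, and both parts are computational rather than conceptual. The first difficulty is completeness of the order-type enumeration in the presence of degeneracies: one must guarantee that no admissible $11$-point configuration --- in particular none exploiting collinearities --- is overlooked, since a single missed configuration could host all $49$ graphs and invalidate the conflict collection. This is precisely the subtlety that the earlier flaw concerned, so the reduction from arbitrary point sets to a finite, provably exhaustive list of (possibly degenerate) order types must be argued carefully. The second difficulty is scale: there are billions of order types of $11$ points, so the embeddability tests, the symmetry reductions, and the certification step must all be engineered to run within feasible time and memory, and the lower-bound certificate must be verified in exact arithmetic so that the final inequality on the minimum transversal is rigorous.
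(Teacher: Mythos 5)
Your proposal contains two genuine problems, one conceptual and one that makes a key step potentially fail. First, your central premise about degeneracies is backwards. You claim that the enumeration ``must range over all order types of $11$ points, including the degenerate ones'' and that this is ``where earlier versions went wrong.'' Both claims are false. The conditions defining a plane straight-line drawing (non-adjacent edges are disjoint, no vertex lies on a non-incident edge) are \emph{open} conditions on the coordinates; hence if some point set with collinear triples simultaneously hosts a finite family of graphs, a sufficiently small perturbation into general position hosts the same family via the same vertex-to-point maps. This is exactly the paper's ``fundamental observation'' at the start of Section~\ref{sec:proof_conflict_11}, and it is what legitimizes restricting the search to the $2{,}343{,}203{,}071$ simple abstract order types on $11$ points. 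The flaw in earlier versions was a bug in the C++ implementation (see Section~\ref{sec:thebug}), not an overlooked class of degenerate configurations. Your variant is not unsound---checking a superset of order types can only strengthen the conclusion---but it rests on a wrong justification and would require a complete enumeration of degenerate $11$-point order types, a vastly larger task for which you provide no method and for which no database or established axiomatic extension machinery (analogous to the signotope-based enumeration of simple abstract order types) is available. As stated, the certification step of your proof could not actually be executed.

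Second, your lower-bound argument relies on a certificate that need not exist. The minimum size of a conflict collection is an \emph{integral} hitting-set optimum, and LP duality only bounds it from below by the \emph{fractional} matching optimum; because of the integrality gap, it is entirely possible that the minimum conflict collection has size $37$ while every fractional matching has value at most $36$, in which case your method can never prove the second part of Theorem~\ref{thm:conflict_11}. Your ``cleanest realization'' ($m=37$ order types, $k=1$, i.e.\ $37$ order types whose sets of non-embeddable triangulations are pairwise disjoint) is an even stronger structural assertion than the theorem itself, and nothing suggests such a family exists. The paper avoids this trap: it restricts to explicit finite sets of order types (the $287{,}871$ survivors of Phase~2 and the $2{,}194$ survivors of Phase~4), computes the full embeddability matrix against all $1{,}249$ triangulations, and then solves the resulting integer program \emph{exactly} by branch-and-bound (Gurobi, independently re-verified with CPLEX), obtaining an integral optimum of at least $37$; since any conflict collection must in particular hit every order type in the restricted set, the bound transfers to the full problem. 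To repair your proof you would either have to adopt this exact-IP route (the solver's branch-and-bound run is then the certificate) or first establish that a fractional certificate of value exceeding $36$ actually exists, which you have not done. Your upper-bound strategy (heuristically find $49$ candidates, then certify by exhaustive enumeration with early termination) does match the paper's structure, modulo the degeneracy issue above.
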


\begin{corollary}
\label{cor:no_uni_11}
There is no 11-universal set of size 11 -- even for stacked triangulations.
Hence, $f_p(11) = f_s(11) = 12$.
\end{corollary}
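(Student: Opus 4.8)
The plan is to prove the two inequalities $f_s(11) \ge 12$ and $f_p(11) \le 12$ separately, and then to combine them with the trivial relation $f_s(n) \le f_p(n)$ recalled in the introduction to pin both quantities down to $12$.

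First I would establish the lower bound, which is an immediate consequence of the first half of Theorem~\ref{thm:conflict_11} together with the definition of a conflict collection. Suppose for contradiction that some set $S$ of $11$ points were $11$-universal for stacked triangulations. Then every stacked triangulation on $11$ vertices would admit a plane straight-line embedding with vertices drawn from $S$; in particular this would hold simultaneously for each of the $49$ stacked triangulations provided by Theorem~\ref{thm:conflict_11}. But by construction these $49$ graphs form a conflict collection, i.e.\ no common set of $11$ points admits plane straight-line drawings of all of them -- a contradiction. Hence no $11$-point set is $11$-universal for stacked triangulations, so $f_s(11) \ge 12$, and since $f_s(11) \le f_p(11)$ also $f_p(11) \ge 12$.

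Next I would establish the upper bound $f_p(11) \le 12$ by producing a single set $S^\star$ of $12$ points and certifying that it is $11$-universal. The first step is a reduction from arbitrary planar graphs to triangulations: every planar graph on $11$ vertices is a spanning subgraph of some maximal planar graph on the same vertex set, and restricting a plane straight-line drawing to a subgraph keeps it plane while leaving all vertex positions unchanged. It therefore suffices to verify that every $11$-vertex triangulation admits a plane straight-line drawing using $11$ of the $12$ points of $S^\star$. As there are only finitely many $11$-vertex triangulations, and for each of them only finitely many assignments of its vertices to points of $S^\star$, this is a finite check that I would delegate to the computer, reusing the search machinery developed for Theorem~\ref{thm:conflict_11}.

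Putting the two bounds together with $f_s(11) \le f_p(11)$ gives $12 \le f_s(11) \le f_p(11) \le 12$, whence $f_s(11) = f_p(11) = 12$. The lower bound is essentially free once Theorem~\ref{thm:conflict_11} is available, so the hard part will be the upper bound: one must actually find $12$ points that simultaneously accommodate every $11$-vertex triangulation and then confirm this exhaustively. Because the number of (triangulation, placement) pairs is enormous, the main obstacle is making the verification tractable, which I expect to require careful pruning -- for instance exploiting symmetries of $S^\star$ and placing the vertices incrementally in a stacking order so that partial drawings containing a crossing can be discarded early.
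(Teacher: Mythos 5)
Your proposal matches the paper's own argument: the lower bound $f_s(11)\ge 12$ follows exactly as you say from the conflict collection of Theorem~\ref{thm:conflict_11} (the paper notes it ``follows directly'' once the exhaustive search over all order types of 11 points shows no set of 11 points accommodates all stacked triangulations), and the upper bound is witnessed, just as you propose, by an explicit computationally verified 11-universal set of 12 points (Listing~\ref{lst:n12univ11}), with the same reduction to checking only triangulations. Both the decomposition into the two inequalities and the means of establishing each are essentially identical to the paper's.
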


\noindent
The equality in Corollary~\ref{cor:no_uni_11} is witnessed 
by an 11-universal sets of 12 points (cf.\ Listing~\ref{lst:n12univ11}). 
\begin{lstlisting}[
caption={An 11-universal set of $12$ points.},
captionpos=b,label=lst:n12univ11,basicstyle=\small,breaklines=true]
 [(214,0),(0,13),(2,16),(9,26),(124,12),(133,11),(148,9),(213,1),(211,4),(210,6),(116,179),(122,197)]
\end{lstlisting}

Last but not least, since all known proofs for lower bounds 
make use of separating triangles,
we also started the investigation of 4-connected triangulations.
In Section~\ref{sec:4connected} we present some $n$-universal sets of size~$n$ 
for 4-connected planar graphs for all $n \le 17$.

A detailed description of our tools can be found in 
Section~\ref{sec:tools}, and the edge list of the 49 stacked triangulationsgraphs of the conflict collection 
can be found in Section~\ref{sec:conflict_collection}.

\section{Proof of Theorem~\ref{thm:lower129}}
\label{sec:proof_theorem_lower129}

To prove the theorem, we use a refined counting argument 
based on a construction of a set of labeled stacked triangulations 
that was already introduced in \cite{CardinalHoffmannKusters2015}.
There it was used to disprove the existence of $n$-universal sets of $n \ge 15$ points 
for the family of stacked triangulations.

\goodbreak
\begin{definition}[Labeled Stacked Triangulations, {cf.~\cite[Section~3]{CardinalHoffmannKusters2015}}]
For every integer $n \ge 4$, we define the family $\mathcal{T}_n$ of labeled stacked triangulations 
on the set of vertices $V_n:=\{v_1,...,v_n\}$ inductively as follows:  
\noindent
\begin{itemize}
\item $\mathcal{T}_4$ consists only of the complete graph $K_4$ with labels $v_1,\ldots,v_4$.
\item If $T$ is a labeled graph in $\mathcal{T}_{n-1}$ with $n \ge 5$, and $v_iv_jv_k$ defines a face of $T$, 
then the graph obtained from $T$ by stacking the new vertex $v_n$ to $v_iv_jv_k$ 
(i.e., connecting it to $v_i$, $v_j$, and~$v_k$) is a member of $\mathcal{T}_n$.
\end{itemize}
\end{definition}
\goodbreak

It is important to notice that, when speaking of $\mathcal{T}_n$, 
we distinguish between elements if they are distinct as \emph{labeled graphs}, 
even if their underlying graphs are isomorphic. 
The essential ingredient we will need from \cite{CardinalHoffmannKusters2015} is the following.

\begin{lemma}[cf.\ {\cite[Lemmas~1 and~2]{CardinalHoffmannKusters2015}}] 
\label{lem:stackedproperties}
\noindent
\begin{enumerate}[(i)]
\item \label{lem:stackedproperties_item_i}
For any $n \ge 4$, the family $\mathcal{T}_n$ contains exactly $2^{n-4}(n-3)!$ labeled stacked triangulations. 
\item \label{lem:stackedproperties_item_ii}
Let $P_n=\{p_1,\ldots,p_n\}$ be a set of $n \ge 4$ labeled points in the plane. 
Then for any bijection $\pi:V_n \rightarrow P_n$, 
there is at most one $T \in \mathcal{T}_n$ such that 
the embedding of~$T$, which maps each vertex $v_i$ to point $\pi(v_i)$, defines a straight-line-embedding of $T$.
\end{enumerate}
\end{lemma}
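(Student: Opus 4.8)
The plan is to prove the two parts of Lemma~\ref{lem:stackedproperties} essentially by induction on $n$, mirroring the inductive definition of $\mathcal{T}_n$.

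For part~\eqref{lem:stackedproperties_item_i}, I would count the labeled stacked triangulations directly. The base case $\mathcal{T}_4 = \{K_4\}$ gives $2^{0}(4-3)! = 1$, which checks out. For the inductive step, observe that every element of $\mathcal{T}_n$ is obtained from a unique element $T \in \mathcal{T}_{n-1}$ by stacking $v_n$ into one of its faces; uniqueness of the parent holds because deleting the (necessarily degree-$3$) vertex $v_n$ recovers $T$. So $|\mathcal{T}_n| = |\mathcal{T}_{n-1}| \cdot (\text{number of faces of a triangulation on } n-1 \text{ vertices})$. By Euler's formula a triangulation on $m$ vertices has exactly $2m-4$ faces (including the outer face), so a member of $\mathcal{T}_{n-1}$ has $2(n-1)-4 = 2n-6 = 2(n-3)$ faces. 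This yields the recurrence $|\mathcal{T}_n| = 2(n-3)\,|\mathcal{T}_{n-1}|$, and unrolling it from the base case gives $|\mathcal{T}_n| = 2^{n-4}(n-3)!$, as claimed. The only subtlety to verify carefully is that stacking into distinct faces of distinct parents always produces distinct labeled graphs, so that no overcounting occurs; this follows from the uniqueness of the parent together with the fact that the face receiving $v_n$ is the set of neighbors of $v_n$.

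For part~\eqref{lem:stackedproperties_item_ii}, the goal is to show that a fixed point–assignment $\pi$ is realized as a straight-line embedding by at most one $T \in \mathcal{T}_n$. I would again argue by induction on $n$, the idea being that the combinatorial structure of any valid $T$ is \emph{forced} by the geometry of the points. The base case $n=4$ is clear, since $K_4$ is the only member of $\mathcal{T}_4$. For the inductive step, suppose two triangulations $T, T' \in \mathcal{T}_n$ both embed plane-straight-line under $\pi$. In each of them, $v_n$ is the last-stacked vertex and hence has degree~$3$, so in the straight-line drawing $v_n$ is mapped to a point $\pi(v_n)$ whose three neighbors are exactly the vertices of the triangle of $T$ (resp.\ $T'$) that geometrically contains $\pi(v_n)$. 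Since the triangle of the embedded parent triangulation containing a given interior point is uniquely determined by the point configuration, the neighbors of $v_n$—and therefore the face into which $v_n$ was stacked—must agree in $T$ and $T'$. Deleting $v_n$ then gives two members of $\mathcal{T}_{n-1}$ that both embed straight-line under $\pi$ restricted to $\{p_1,\dots,p_{n-1}\}$, and the induction hypothesis forces them to be equal; reattaching $v_n$ to the common forced face gives $T = T'$.

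The main obstacle, and the step deserving the most care, is the geometric forcing claim used in the inductive step of part~\eqref{lem:stackedproperties_item_ii}: that in any straight-line embedding of a member of $\mathcal{T}_n$ the last vertex $v_n$ must lie in the interior of the triangle formed by its three neighbors, so that its three neighbors are \emph{geometrically} determined as the unique triangle of the parent's embedding containing $\pi(v_n)$. One must argue that the three edges incident to $v_n$ partition the surrounding region correctly and that $\pi(v_n)$ cannot fall outside the convex hull of its neighbors without creating a crossing—this rests on the planarity of the drawing and the fact that the neighbors of $v_n$ bound a face of the parent triangulation. Once this local geometric fact is established, the rest of the argument is a clean induction, and I would expect the write-up to hinge almost entirely on stating this forcing property precisely and invoking the uniqueness of the containing triangle in a triangulated point configuration.
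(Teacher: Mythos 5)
The paper itself never proves this lemma---it imports it verbatim from Cardinal, Hoffmann, and Kusters~\cite{CardinalHoffmannKusters2015} (their Lemmas~1 and~2)---so your proposal can only be measured against that standard argument. Your part~(i) is correct and is essentially that argument: every $T\in\mathcal{T}_n$ determines a unique parent $T-v_n\in\mathcal{T}_{n-1}$ together with the stacking face $N_T(v_n)$, a maximal planar graph on $m\ge 4$ vertices has $2m-4$ faces by Euler's formula, and the recurrence $|\mathcal{T}_n|=2(n-3)\,|\mathcal{T}_{n-1}|$ unrolls to $2^{n-4}(n-3)!$.

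Part~(ii), however, has a genuine gap, and it sits exactly at the step you yourself flag as the crux. The claim that in every plane straight-line drawing of $T\in\mathcal{T}_n$ the vertex $v_n$ lies \emph{inside} the triangle spanned by its three neighbors---equivalently, that ``$\pi(v_n)$ cannot fall outside the convex hull of its neighbors without creating a crossing''---is false. Counterexample: let $T\in\mathcal{T}_5$ be obtained from $K_4$ by stacking $v_5$ into the face $v_1v_2v_3$. Draw $v_1v_2v_3$ as a triangle with $v_4$ inside it, and place $\pi(v_5)$ outside that triangle in the wedge beyond the corner $v_1$ (the region bounded by the extensions of the segments $v_2v_1$ and $v_3v_1$ past $v_1$). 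The segments from $\pi(v_5)$ to $v_1,v_2,v_3$ then cross nothing---the picture is simply $K_4$ with outer triangle $v_5v_2v_3$ and with $v_4$ inside $v_1v_2v_3$---yet $v_5$ lies outside the convex hull of its neighbors, and \emph{no} bounded triangle of the parent's drawing contains $\pi(v_5)$. Your forcing property therefore fails exactly when $v_n$ is stacked into the face that is drawn as the outer, unbounded region. The repair is to state the forcing in terms of the regions of the parent's drawing \emph{including the unbounded one}: $\pi(v_n)$ lies in precisely one region; if it is a bounded triangle, the neighbors of $v_n$ are forced to be its three corners (any other segment crosses its boundary), and if it is the unbounded region, every segment from $\pi(v_n)$ to a non-hull vertex crosses the hull boundary, so the neighbors are forced to be the three hull vertices, i.e.\ the outer face. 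A second, smaller flaw is the order of your steps: you conclude that the neighbors of $v_n$ agree in $T$ and $T'$ from the uniqueness of the containing triangle in ``the embedded parent triangulation'' \emph{before} knowing that the two parents coincide; that uniqueness is a statement about a single drawing, whereas $T-v_n$ and $T'-v_n$ are a priori different triangulations of the same point set, with different triangles. One should first apply the induction hypothesis (deleting $v_n$ from a plane drawing leaves a plane drawing, so $T-v_n=T'-v_n=:T_0$), and only then run the corrected forcing argument inside the single drawing of $T_0$. With these two repairs your induction goes through and coincides with the argument of~\cite{CardinalHoffmannKusters2015}.
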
 
Figure~\ref{fig:stacked_triang_example} illustrates the idea of item~(\ref{lem:stackedproperties_item_ii}) of Lemma~\ref{lem:stackedproperties}.

\medskip
We need the following simple consequence of the above:
\begin{corollary} \label{cor:consequences_stackedproperties}
Let $P=\{p_1,\ldots,p_m\}$ be a set of $m \ge n \ge 4$ labeled points in the plane. 
Then for any injection $\pi:V_n \rightarrow P$, 
there is at most one $T \in \mathcal{T}_n$ such that 
the embedding of~$T$, which maps each vertex $v_i$ to point $\pi(v_i)$, defines a straight-line-embedding of $T$.
\end{corollary}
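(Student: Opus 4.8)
The plan is to reduce the statement directly to item~(\ref{lem:stackedproperties_item_ii}) of Lemma~\ref{lem:stackedproperties}, exploiting that an injection out of an $n$-element set automatically has an image of size exactly $n$. First I would observe that, since $\pi\colon V_n \to P$ is injective and $|V_n| = n$, the image $P' := \pi(V_n)$ is a subset of $P$ consisting of exactly $n$ distinct points. Writing $P' = \{p'_1,\ldots,p'_n\}$, the map $\pi$ corestricts to a \emph{bijection} $\pi\colon V_n \to P'$ onto this $n$-point set.

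The key point is that, for any given $T \in \mathcal{T}_n$, the property of $v_i \mapsto \pi(v_i)$ being a plane straight-line embedding depends only on the $n$ positions $\pi(v_1),\ldots,\pi(v_n)$ of the vertices of $T$, and not at all on the remaining $m-n$ points of $P$ lying outside the image. Consequently the statement ``the embedding mapping each $v_i$ to $\pi(v_i)$ is a straight-line embedding of $T$'' is literally identical whether we regard $\pi$ as an injection into the large set $P$ or as a bijection onto $P'$. This makes the reduction to the bijective case exact, rather than merely an implication in one direction.

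Having set this up, I would simply apply item~(\ref{lem:stackedproperties_item_ii}) of Lemma~\ref{lem:stackedproperties} to the $n$-point set $P_n := P'$ together with the bijection $\pi\colon V_n \to P'$. That lemma yields at most one $T \in \mathcal{T}_n$ whose induced map is a straight-line embedding, which is precisely the desired conclusion. I do not anticipate any genuine obstacle: the whole content is the remark that injectivity forces the image to have full size $n$, so the larger ambient set $P$ contributes nothing beyond supplying the $n$ target points. The only step requiring a word of care is noting that the straight-line-embedding condition is determined solely by the image of $\pi$, so that passing from $P$ to $P'$ loses no information.
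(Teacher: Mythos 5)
Your proposal is correct and matches the paper's own proof essentially verbatim: both restrict $\pi$ to a bijection onto the image $Q := \pi(V_n)$, note that the straight-line-embedding property is unaffected by the unused points of $P$, and then invoke Lemma~\ref{lem:stackedproperties}(\ref{lem:stackedproperties_item_ii}). The only cosmetic difference is that the paper phrases the conclusion via two triangulations $T_1, T_2$ forced to coincide, while you apply the lemma's ``at most one'' statement directly.
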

\begin{proof}
Let $T_1,T_2 \in \mathcal{T}_n$ be two stacked triangulations 
such that $\pi$ describes a plane straight-line embedding of both. 
Since $\pi$ is an injection, this means that $\pi$ defines 
a straight-line embedding of both $T_1,T_2$
on the sub-point set $Q:=\pi(V_n)$ of $P$ of size $n$. 
Applying Lemma~\ref{lem:stackedproperties}(\ref{lem:stackedproperties_item_ii})
to the bijection $\pi:V_n \rightarrow Q$ and $T_1,T_2$, 
we deduce $T_1=T_2$. This proves the claim.
\end{proof}

We are now ready to prove Theorem~\ref{thm:lower129}.

\begin{proof}[Proof of Theorem~\ref{thm:lower129}.]
Let $n \ge 4$ be arbitrary and $m:=f_s(n) \ge n$. 
There exists an \mbox{$n$-universal} point set $P=\{p_1,\ldots,p_m\}$
for all stacked triangulations,
hence for every $T \in \mathcal{T}_n$
there exists a straight-line embedding  of $T$ on~$P$,
with (injective) vertex-mapping $\pi:V_n \rightarrow P$.
By Corollary~\ref{cor:consequences_stackedproperties}, 
we know that no two stacked triangulations from~$\mathcal{T}_n$ 
(each of which has the same vertex set)
yield the same injection~$\pi$. 
Consequently, by Lemma~\ref{lem:stackedproperties}(\ref{lem:stackedproperties_item_i}), we have
\[
2^{n-4}(n-3)!=|\mathcal{T}_n| \leq \frac{m!}{(m-n)!},
\]
which means
\[
\frac{1}{16n(n-1)(n-2)}2^n \leq \binom{m}{n}=\binom{f_s(n)}{n}.
\]
Let $\beta(n):=\frac{f_s(n)}{n}$. Using the fact that (Stirling-approximation)
\[
\binom{f_s(n)}{n} \sim \underbrace{\sqrt{\frac{f_s(n)}{2\pi n(f_s(n)-n)}}}_{\le 1}\frac{f_s(n)^{f_s(n)}}{n^n(f_s(n)-n)^{f_s(n)-n}}
\leq \left(\frac{\beta(n)^{\beta(n)}}{(\beta(n)-1)^{\beta(n)-1}}\right)^n,
\]
we deduce (taking logarithms) that:
\[
(1-o(1))n \leq \log_2 \left(\frac{\beta(n)^{\beta(n)}}{(\beta(n)-1)^{\beta(n)-1}}\right)n \Longleftrightarrow 2-o(1) 
\leq \frac{\beta(n)^{\beta(n)}}{(\beta(n)-1)^{\beta(n)-1}}.
\]
Consequently, $\beta(n) \ge (1-o(1))\alpha$, 
where $\alpha$ is the unique solution to $ \frac{\alpha^\alpha}{(\alpha-1)^{\alpha-1}}=2$. 
This proves $f_s(n)=n\cdot \beta(n) \ge (1-o(1))\alpha n$, which is the claim.
\end{proof}

\section{Proof of Theorem~\ref{thm:conflict_11} and Corollary~\ref{cor:no_uni_11}}
\label{sec:proof_conflict_11}

In the following, 
we outline the strategy which we have used to find 
a conflict collection of 49 stacked $11$-vertex triangulations.
We refer the reader who is mainly interested in verifying our computational results
 directly to Section~\ref{sec:how_to_verify}.

\medskip

One fundamental observation is the following:
if an $n$-universal point set has collinear points, 
then by perturbation one can obtain another $n$-universal point set \emph{in general position}, 
i.e., with no collinear points.
Hence, in the following we only consider point sets in general position.
Also it is not hard too see that, if two point sets are \emph{combinatorially equivalent}, i.e., 
there is a bijection such that the corresponding triples of points induce the same orientations,
then both sets allow precisely the same straight-line drawings. 
Hence, in the following we further restrict our considerations to \emph{(non-degenerated) order types}, i.e., 
the set of equivalence classes of point sets (in general position).

\subsection{Enumeration of Order Types}
\label{subsec:OrderTypeEnum}

The database of all order types of up to $n=11$ points
was developed by Aurenhammer, Aichholzer, and Krasser \cite{AichholzerAurenhammerKrasser2001,AichholzerKrasser2006}
(see also Krasser's dissertation \cite{Krasser2003}).
The file for all order types of up to $n=10$ points (each represented by a point set) 
is available online,
while the file for $n=11$ requires almost 100GB of storage 
and is available on demand \cite{AichholzerOTDB}.
Their algorithm starts with an \emph{abstract order type} on $k-1$ points 
(which only encodes the triple orientations of a point set), 
computes its dual pseudoline arrangement, 
and inserts a $k$-th pseudoline in all possible ways.
Due to geometrical constraints,
there are in fact abstract order types enumerated which do not have a realization as a point set.
However, since every order type is in fact also an abstract order type,
it is sufficient for our purposes to test all abstract order types 
-- independent from realizability.

\medskip
For means of redundancy and to provide a fully checkable and autonomous proof, 
we have implemented an alternative algorithm 
to enumerate all abstract order types based on the following idea:
Given a set of points $s_1,\ldots,s_n$ with $s_i=(x_i,y_i)$ sorted left to right\footnote{%
in the dual line arrangement the lines are sorted by increasing slope},
and let
\[
\chi_{ijk} := \sgn \det \begin{pmatrix}
                         1  & 1  & 1  \\
                         x_i& x_j& x_k\\
                         y_i& y_j& y_k
                        \end{pmatrix} \in \{-1,0,+1\}
\]
denote the induced triple orientations,
then  the \emph{signotope axioms} assert that, 
for every 4-tuple $s_i,s_j,s_k,s_l$ with {$i<j<k<l$},
the sequence 
\[
\chi_{ijk},\ \chi_{ijl},\ \chi_{ikl},\ \chi_{jkl}
\] 
(index-triples in lexicographic order)
changes its sign at most once. 
For more information on the signotope axioms 
we refer to Felsner and Weil \cite{FelsnerWeil2001} (see also~\cite{BalkoFulekKyncl2015}).

Given an abstract order type on $n-1$ points,
we insert a $n$-th point in all possible ways,
such that the signotope axioms are preserved.
With our C++ implementation, 
we managed to verify the numbers of abstract order types from \cite{AichholzerAurenhammerKrasser2001,AichholzerKrasser2006,Krasser2003}.
In fact, the enumeration of all 2,343,203,071 abstract order types of up to $n=11$ points (cf.~\href{http://oeis.org/A6247}{OEIS/A6247})
can be done within about 20 CPU hours.

\subsection{Enumeration of Planar Graphs}
\label{subsec:GraphEnum}

To enumerate all non-isomorphic maximal planar graphs on 11 vertices (i.e, triangulations),
we have used the plantri graph generator (version~4.5) \cite{BrinkmannMcKay1999}.
It is worth to note that also the nauty graph generator \cite{McKayPiperno2014} can be used for the enumeration
because the number of all (not necessarily planar) graphs on 11 vertices is not too large
and the database can be filtered for planar graphs in reasonable time 
-- negligible compared to the CPU time which we have used for later computations.
For various computations on graphs, 
such as filtering stacked triangulations or to produce graphs for this paper, 
we have used SageMath \cite{sagemath_website}\footnote{%
We recommend the Sage Reference Manual on Graph Theory \cite{sagemath_graphtheory_manual} 
and its collection of excellent examples.}.

\subsection{Deciding Universality using a SAT Solver}

For a given point set $S$ and 
a planar graph $G=(V,E)$
we model a propositional formula in conjunctive normal form (CNF)
which has a solution if and only if $G$ can be embedded on~$S$
-- in fact, the variables encode a straight-line drawing.\footnote{%
Cabello~\cite{Cabello2006} showed that deciding embeddability is \NP-complete in general.
His reduction, however, constructs a 2-connected graph, and therefore
the hardness remains unknown for 3-connected planar graphs.}

To model the CNF, we have used 
the variables $M_{vp}$ to describe whether vertex $v$ is mapped to point~$p$,
and
the variables $A_{pq}$ to describe whether the straight-line segment $pq$ 
between the two points $p$ and $q$ is ``active'' in a drawing.

It is not hard to use a CNF to assert that 
such a vertex-to-point mapping is bijective.
Also it is easy to assert that, 
if two adjacent vertices $u$ and $v$ are 
mapped to points $p$ and $q$,
then the straight-line segment $pq$ is active.
For each pair of crossing straight-line segments $pq$ and $rs$
(dependent on the order type of the point set)
at least one of the two segments is not allowed to be active.

\paragraph*{Implementation detail:}
We have implemented a C++ routine which, given a point set and a graph as input, 
creates an instance of the above described model and then uses the solver MiniSat~2.2.0 \cite{EenSorensen2003}
to decide whether the graph admits a straight-line embedding.

\subsection{Finding Conflict Collections -- A Quantitive Approach}
\label{ssec:QuantitiveApproach}

Before we actually tested whether a set of 11 points is 11-universal or not, 
we discovered a few necessary criteria for the point set, 
which can be checked much more efficiently.
These considerations allowed a significant reduction of the total computation times.

\paragraph*{Phase 1:}
There are various properties that a universal point set has to fulfill:

Property~1: The planar graph depicted in Figure~\ref{fig:graph3sep} 
asserts an 11-universal set $S$ -- if one exists -- to have a certain structure.
If the embedding is as on the left of Figure~\ref{fig:graph3sep}, 
then one of the two degree~3 vertices is drawn as extremal point of~$S$, i.e., 
lies on the boundary of the convex hull $\conv(S)$ of~$S$.
After the removal of this particular point, 
the remaining 10 points have 4 convex layers of sizes 3, 3, 3, and 1, respectively.
If the embedding is as on the right of Figure~\ref{fig:graph3sep}, 
then either one or two points of the blue triangle are drawn as extremal points of~$S$ 
(recall the triangular convex hull of~$S$).
And again, the points inside the blue triangle and outside 
the blue triangle have convex layers of sizes 3, 3, 1, and 3, 1, respectively.

\begin{figure}[htb]
\centering
\includegraphics{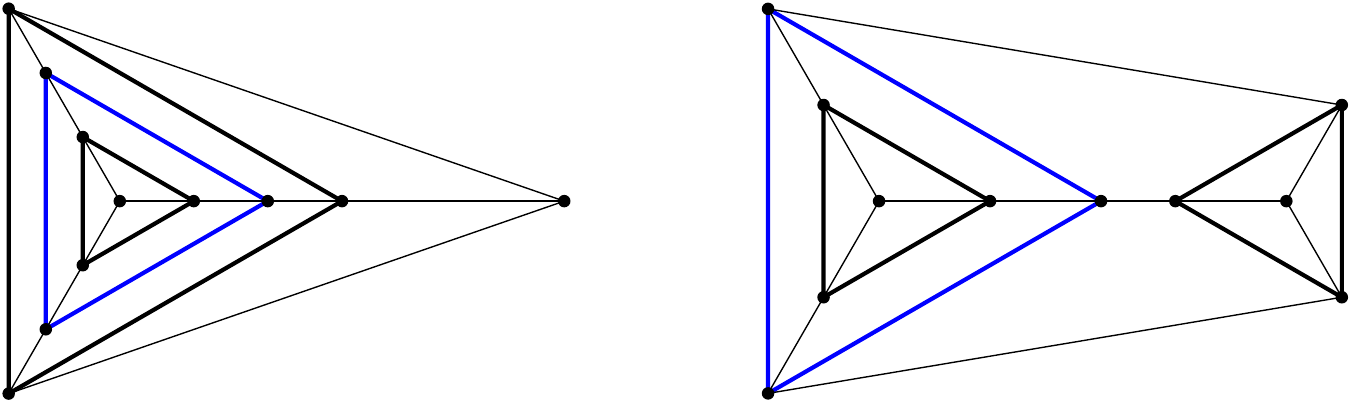}
\caption{The two embeddings of a graph, which forces the point set to have a certain structure.
Each of the vertices of the blue triangle connects to one of the vertices of the two copies of $K_4$.}
\label{fig:graph3sep}
\end{figure}

Property~2: 
There exist a stacked triangulations on 11 points
in which every face is incident to a degree-3-vertex;
see for example Figure~\ref{fig:stacked_triang_example}.
Independent from the embedding of this graph,
there is a degree-3-vertex on the outer face,
and hence all inner points lie inside a triangle spanned by an interior point and two extremal points.
In particular, such a point set must have a triangular convex hull.

Altogether, only 262,386,428 of the 2,343,203,071 abstract order types on 11 points 
fulfill Properties~1 and~2. (The computation time was about 10 CPU hours.)

\paragraph*{Phase 2:}
For each of the 262,386,428 abstract order types on 11 points
which fulfill the conditions above,
we have tested the embeddability of all maximal planar graphs on $n$ vertices separately using
a SAT-solver based approach.
In fact, as soon as one graph was not embeddable, 
the remaining graphs needed not to be checked.
To speed up the computations we have used a priority queue: 
a graph which does not admit an embedding gets increased priority for other point sets to be tested first.

To keep the conflict collection 
as small as possible,
we first filtered out all point sets
which do not allow a simultaneous embedding of all planar graphs on 11 vertices with maximum degree~$10$.
Only 287,871 of the 262,386,428 abstract order types remained (computation time about 100 CPU days).
It is worth to note that there are 82 maximal planar graphs on 11 vertices with maximum degree~10 (cf.~\href{http://oeis.org/A207}{OEIS/A207}),
and that each of these graphs is a stacked triangulation.

At this point one can check with about 10 CPU hours 
that the remaining 287,871 abstract order types are not universal for stacked triangulation on $11$ vertices.
Moreover, since some stacked triangulations on 11 vertices (e.g.\ $G_{12}$ from Listing~\ref{lst:11triangs_1}) 
contain the graph from Figure~\ref{fig:graph3sep} as a subgraph, 
the family of all 434 stacked triangulations on 11 vertices 
(cf.~\href{http://oeis.org/A27610}{OEIS/A27610}) is a conflict collection,
and Corollary~\ref{cor:no_uni_11} follows directly.

\paragraph*{Phase 3:}
To find a smaller conflict collection, 
we tested for each of the 434 stacked triangulations and each of the 287,871 remaining abstract order types,
whether an embedding is possible (additional 35 CPU days).
We used this binary information to formulate an integer program searching for a minimal set of triangulations, without simultaneous embedding.
Using the Gurobi solver (version~8.0.0) \cite{gurobi}, 
we managed to find a collection~$\GG$ of 27 stacked triangulations 
which cannot be embedded simultaneously;
see Listing~\ref{lst:11triangs_1} in Section~\ref{sec:conflict_collection}. 

Since we asserted in Phases~1 and~2 that 
\begin{enumerate}[(1)]
\item \label{property1}
the graph in Figure~\ref{fig:graph3sep}, 
\item \label{property2}
a triangulation where every face is incident to a vertex of degree~3, and 
\item \label{property3}
all 82 triangulations with maximum degree 10
\end{enumerate}
occur in the conflict collection, this yields a conflict collection of size $111=1+1+82+27$.
In fact, since this subset of 27 stacked triangulations contains triangulations 
fulfilling properties (\ref{property1}) and~(\ref{property2}) 
(see, e.g., graphs $G_{12}$ und $G_{10}$ in Listing~\ref{lst:11triangs_1}), 
we indeed have a conflict collection of size~$109$.

\medskip

We have also ran the computations for the collection of all 1,249 triangulations 
(cf.~\href{http://oeis.org/A109}{OEIS/A109}),
and the Gurobi solver showed that any conflict collection 
of (arbitrary) 11-vertex triangulations has size at least~26.

\paragraph*{Phase 4:}
Recall that a minimal conflict collection not necessarily needs to fulfill the properties (\ref{property1})--(\ref{property3}).
Hence we again repeat the strategy from Phase~2, except that we test for the embeddability of 
the 27 stacked triangulations from the collection~$\GG$ obtained in Phase~3 instead of 
the 82 maximal planar graphs on 11 vertices with maximum degree~10.

After another 230 days of CPU time, 
our program had filtered out 2,194 of the 262,386,428 abstract order types (obtained in Phase~1)
which allow a simultaneous embedding of the 27 stacked triangulations from~$\GG$.

\paragraph*{Phase 5:}
As the reader might already guess, 
we proceed as in Phase~3:
we tested for each of the 434 stacked triangulations
and each of the 2,194 order types from Phase~4,
whether an embedding is possible (only some CPU hours).
Using the Gurobi solver, we managed to find 
a collection~$\HH$ of 22 stacked triangulations, which cannot be simultaneously embedded on those order types; 
see Listing~\ref{lst:11triangs_2} in Section~\ref{sec:conflict_collection}. 

Together with the 27 stacked triangulations from~$\GG$
we obtain a conflict collection of size~49, 
and the first part of Theorem~\ref{thm:conflict_11} follows.

\paragraph*{Phase 6:}
To further improve the lower bound,
we have repeated our computations for 
the union of the two sets of point sets obtained in Phase~3 and Phase~5, respectively.
Using Gurobi, 
we obtained that
\begin{itemize}
\item 
any conflict collection of stacked triangulations must have size at least~40, and
\item 
any conflict collection of (arbitrary) triangulations must have size at least~37. 
\end{itemize}

For means of redundancy, we have verified all lower bounds obtained by Gurobi
also using CPLEX (version~12.8.0.0) \cite{IBMstudio}, 
which performed similar to Gurobi. 

\medskip

This completes the proof of the second part of Theorem~\ref{thm:conflict_11}.

\subsection{How to Verify our Results?}
\label{sec:how_to_verify}

To verify the computational results
which are essential for the proof of the first part of Theorem~\ref{thm:conflict_11}, 
one can enumerate all order types on 11 points
and test the conflict collection of 49 triangulations
(\verb|data/triangulations/n11_conflicting49.txt|).
Starting with the unique order type on 3 points 
(\verb|data/order_types/n3_order_types.bin|),
it takes about 1 CPU day to enumerate all order types on 11 points.
By falsifying simultaneous embeddability of the 49 graphs 
(this computation takes about 200 CPU days, but can be run parallelized),
the first part of Theorem~\ref{thm:conflict_11} is then verified.

\medskip
For the second part of the Theorem~\ref{thm:conflict_11},
one can filter the order types, 
which allow a simultaneous embedding of the triangulations from Phase~2 and~4,
and then -- using CPLEX or Gurobi -- 
compute the minimum size of a conflicting collection
among all 11-vertex triangulations and 11-vertex stacked triangulations, respectively.
To save some computation time, we provide the filtered list in the files 
\verb|data/triangulations/n11_after_phase2.bin.zip| and \verb|n11_after_phase4.bin.zip|.
The list of all (stacked) triangulations is provided in the files
\verb|n11_all_triangulations.txt| and 
\verb|n11_all_stacked_triangulations.txt|.

\medskip
A more detailed description is provided in Section~\ref{sec:tools}.
The source codes of our programs and relevant data are 
available on the companion website~\cite{scheucher_website}.

\section{Universal Sets for 4-Connected Graphs}
\label{sec:4connected}

For $n\leq 10$, examples of $n$-universal sets of $n$ points for planar $n$-vertex graphs 
were already given in \cite{CardinalHoffmannKusters2015}.
To provide $n$-universal sets for 4-connected planar graphs for $n=11,\ldots,17$,
we slightly adapted our framework. 
Again, we enumerated 4-connected planar triangulations using the plantri graph generator,
and using our C++ implementation, tested for universality.
Our idea to find the proposed point sets for $n = 11,\ldots,17$
was to start with an $(n-1)$-universal set of $n-1$ points and 
insert an $n$-th point in all possible ways (cf. Section~\ref{subsec:OrderTypeEnum}). 
The abstract order types obtained in this way -- if they turned out to be universal --
were then realized as point sets using our framework \emph{pyotlib}\footnote{%
The ``\textbf{py}thon \textbf{o}rder \textbf{t}ype \textbf{lib}rary'' was initiated during 
the Bachelor's studies of the first author \cite{scheucher2014} 
and provides many features to work with (abstract) order types
such as local search techniques, 
realization or proving non-realizability of abstract order types, 
coordinate minimization and ``beautification'' for nicer visualizations.
For more information, please consult the author.}.
The obtained sets are given in Listing~\ref{lst:4conn_universal}.

\begin{lstlisting}[caption={A set $\{p_1,\ldots,p_{17}\}$ of 17 points such that
$\{p_1,\ldots,p_k\}$ is universal for 4-connected planar $k$-vertex graphs for all $k \in \{11,\ldots,17\}$.},
captionpos=b,label=lst:4conn_universal,basicstyle=\small,breaklines=true]
 [(612,666),(754,635),(415,709),(884,597),(596,695),(890,977),(384,716),(834,609),(424,707),(974, 10),(890,962),(306,805),(301,810),(4,736),(0,735),(975,6),(980,0)]
\end{lstlisting}

It is also worth to note that the numbers of 4-connected triangulations for $n=11,\ldots,20$ are
25; 87; 313; 1,357; 6,244; 30,926; 158,428; 836,749; 4,504,607; 24,649,284  (cf.~\href{http://oeis.org/A7021}{OEIS/A7021}).
Hence, even if a universal point set is known, 
it is getting more and more time consuming to verify $n$-universality
as $n$ gets larger (at least using our SAT solver approach).

\section{Detailed Description of our Tools}
\label{sec:tools}

In following we give a detailed description of the tools which 
are required to verify the proof of Theorem~\ref{thm:conflict_11}. 
Moreover, we exemplify how the tools can be used. 
Even though our C++ code is platform-independent,
we assume that the reader uses a Unix/Linux operating system and 
only give usage examples for this particular setup.
(We ran our experiments in Fedora 27 and openSUSE 15.)

\subsection{Enumerating Abstract Order Types}

\paragraph*{extend\_order\_type}
We provide a C++ program \verb|extend_order_type| 
which reads all abstract order types on a fixed number of points $n$ from the input file,
extends it in all possible ways,
and writes all so-obtained abstract order types on $n+1$ points to an output file (without duplicates).
The program (see the folder \verb|cprogram/scripts/extend_order_type/|)
can be built using qmake\footnote{In fact, no Qt-specific features are used. 
Alternatively to qmake one could also use cmake or just use a stand-alone Makefile} and make.
The following bash command builds the program:
\begin{verbatim}
$ qmake && make
\end{verbatim}

\paragraph*{The File Format}
Concerning the file format, we have to explain a little more theory: 
An abstract order type can be encoded by its triple orientations 
\[
\Lambda_{i,j,k} \in \{-1,0,+1\} \text{ for each } 1 \le i,j,k \le n.
\]
Since encoding this ``big lambda matrix'' uses a cubic amount of bits,
it is more efficient to encode the ``small lambda matrix'', which has the following entries:
\[
\lambda_{i,j} := |\{ k \in \{1,\ldots,n\} \setminus \{i,j\} \colon \Lambda_{i,j,k} > 0 \}|\text{ for each }1 \le i,j,k \le n.
\]
Diagonal elements $\lambda_{i,i}$ are omitted (or can be set to zero).
This data structure was first introduced by Goodman and Pollack~\cite{GoodmanPollack1983}.

Small lambda matrices of (non-degenerated) abstract order types fulfill
$\lambda_{i,j}+\lambda_{j,i} = n-2$ 
(i.e., for each two fixed points $i,j$, 
any other point either lies on the left or on the right side of the directed line through $i$ and $j$),
hence only entries $\lambda_{i,j}$ with $1 \le i<j \le n$ need to be stored.
Moreover, the first point can be assumed to lie on the boundary of the convex hull,
and other points can be assumed to be sorted around the first point.
Such a labeling of points is called ``natural labeling'' and 
yields $\Lambda_{1,i,j} = +$ for $1 < i < j \le n$ and
$\lambda_{1,j} = j-1$ for all $1 < j \le n$.
Consequently, elements from the first row of the small lambda matrix need not to be stored.
Note that the lexicographically minimal small lambda matrix 
(over all labelings) -- 
which we compute to distinguish different order types --
is also naturally labeled.

Altogether, we encoded the entries $\lambda_{i,j}$ for $1 < i<j \le n$ as 8-bit (1-byte) integers in lexicographic order,
i.e, 
\[
\lambda_{2,3},
\lambda_{2,4},
\ldots,
\lambda_{2,n},
\lambda_{3,4},
\lambda_{3,5},
\ldots,
\lambda_{n-1,n}.
\]

For more information we again refer to the articles by Aurenhammer, Aichholzer, and Krasser \cite{AichholzerAurenhammerKrasser2001,AichholzerKrasser2006}, 
and the dissertation of Krasser~\cite{Krasser2003}.

\paragraph*{Usage of the Program}
The program \verb|extend_order_type| can be used as follows:
\begin{verbatim}
./extend_order_type [n] [order types file] [parts] [from part] [to part]
\end{verbatim}
The following describes the parameters.
\begin{itemize}
 \item ``n'' is the number of points in the abstract order types from the input file,
 \item ``order types file'' is the path to the input file,
 \item ``parts'' is the number of threads in total,
 \item ``from part'' is the id of the first thread to be run, and
 \item ``to part'' is the id of the first thread not to be run.
\end{itemize}
The difference ``to part''-``from part'' is precisely the number of threads 
to be started on the local machine. 
As an example, to start a computation on 4 machines with 4 threads each (16 threads in total),
one can simply run one of the following commands on each of the machines:
\begin{verbatim}
./extend_order_type [n] [order types file] 16 0 4
./extend_order_type [n] [order types file] 16 4 8
./extend_order_type [n] [order types file] 16 8 12
./extend_order_type [n] [order types file] 16 12 16
\end{verbatim}
We also provide our python script \verb|create_jobs.py|,
which we used to automatically create job files for the parallel computations on the cluster.

\paragraph*{Complete Enumeration}
To generate all abstract order types,
we start with a binary file \verb|n3_order_types.bin| with content ``0x00'' (one byte)
-- this encodes the unique order type on 3 points, which is described by the small lambda matrix
\begin{verbatim}
-  0  1
1  -  0*
0  1  -
\end{verbatim}
The entry $\lambda_{23}=0$, which is marked with a star (*), is the one entry 
which is actually encoded as ``0x00'' in the file.
This file is also available in the folder \verb|data/order_types/|.

The following command now enumerates all abstract order types on 4 points:
\begin{verbatim}
$ xxd n3_order_types.bin
00000000: 00                                       .
$ ./extend_order_type 3 n3_order_types.bin 1 0 1
n: 3
starting threads: 1
[0/1] started
[0/1] n 3       ct 0    extensions 0
[0/1] finished
all threads done.
total solutions: 2/1
$ xxd n3_order_types.bin.ext0_1.bin 
00000000: 0001 0001 0001
\end{verbatim}
Note that the command xxd displays a hex dump of the given file.
The generated output file \verb|n3_order_types.bin.ext0_1.bin| 
contains 6 bytes in total, encoding the two order types on 4 points (with 3 bytes each).
The first three bytes 
encode the order type of 4 points in convex position, 
which has the following small lambda matrix:
\begin{verbatim}
-  0  1  2
2  -  0* 1*
1  2  -  0*
0  1  2  -
\end{verbatim}
The remaining three bytes encode the other order type of 4 points, 
which has a triangular convex hull and one interior point. 
Its small lambda matrix is the following:
\begin{verbatim}
-  0  1  2
2  -  1* 0*
1  1  -  1*
0  2  1  -
\end{verbatim}
When renaming the file \verb|n3_order_types.bin.ext0_1.bin| to \verb|n4_order_types.bin|, 
one can now analogously enumerate all order types of 5, 6, \ldots points with
\begin{verbatim}
./extend_order_type 4 n4_order_types.bin 1 0 1
./extend_order_type 5 n4_order_types.bin 1 0 1
...
\end{verbatim}

\subsection{Enumerating Triangulations}

\paragraph*{Plantri}
Having the graph generator plantri \cite{BrinkmannMcKay1999}\footnote{Plantri is available from \url{https://users.cecs.anu.edu.au/~bdm/plantri/}. 
For more information on plantri, 
we refer to \url{https://users.cecs.anu.edu.au/~bdm/plantri/plantri-guide.txt},
and for more information on the graph6 format, 
we refer to \url{https://users.cecs.anu.edu.au/~bdm/data/formats.html}.}  
installed,
it can be run with parameters ``[number of points] -g'', 
to enumerate all triangulations on the specified number of points in graph6 format.
With the additional parameter ``-c4'', only 4-connected triangulations are enumerated.
As an example, following command enumerates all 4-connected triangulations on 8 vertices:
\begin{verbatim}
$ ./plantri 8 -g -c4 
./plantri 8 -g -c4 
G|tJH{
G|thXs
2 triangulations written to stdout; cpu=0.00 sec
\end{verbatim}
To store graphs in files, one can simply pipe the standard output to the desired file:
\begin{verbatim}
$ ./plantri 8 -g -c4 > n8c4.g6
./plantri 8 -g -c4 
2 triangulations written to stdout; cpu=0.00 sec
\end{verbatim}

\paragraph*{Filter Triangulations}
Having the mathematics software system SageMath \cite{sagemath_website} installed,
we used the Sage-scripts \verb|filter_3tree.sage| and  \verb|filter_maxdeg.sage|
to filter stacked triangulations and triangulations with maximum degree $|V|-1$, respectively.

The script \verb|filter_3tree_relabel.sage| is a slight modification of \verb|filter_3tree.sage|,
which relabels the vertices of the given graph in a way, 
such that the vertices 0, 1, and 2 span the initial triangle,
and the $k$-th vertex is stacked into a triangular face of the subgraph 
induced by the vertices $0,1,\ldots,k-1$. 

Note that the triangulations enumerated by plantri do not necessarily fulfill this property. 
The triangulations shown in Listings~\ref{lst:11triangs_1} and~\ref{lst:11triangs_2}
were relabed using \verb|filter_3tree_relabel.sage|.
The respective files are available in the folder \verb|data/triangulations/|.

The scripts \verb|filter_property1.sage| and \verb|filter_property2.sage| 
can be used to filter triangulations 
which fulfill Properties~1 and~2 from Section~\ref{ssec:QuantitiveApproach}.

\paragraph*{Edge-List Encoding}
We have chosen a different plain text format, which is easier to load in C++:
We encode a graph by its edge list.
For each graph, we write the start and end vertices of the edges 
$\{u_1,v_1\},\ldots,\{u_m,v_m\}$
simply as \mbox{``$u_1$ $v_1$ $u_2$ $v_2$ \ldots $u_m$ $v_m$''} in a line, followed by a line-break.
The following example gives an illustration
(continues with the 4-connected 8-vertex triangulations from before):
\begin{verbatim}
$ sage encode.sage n8c4.g6 
0 1 0 2 0 3 0 4 1 2 1 4 1 5 1 6 2 3 2 6 2 7 3 4 3 7 4 5 4 7 5 6 5 7 6 7
0 1 0 2 0 3 0 4 1 2 1 4 1 5 2 3 2 5 2 6 2 7 3 4 3 7 4 5 4 6 4 7 5 6 6 7
\end{verbatim}
This encoding can be performed using the Sage-script \verb|encode.sage|.

\paragraph*{Drawing Graphs}
Last but not least, we provide the script \verb|draw.sage|
which we used to automatically generate drawings for stacked triangulations; 
see Listings~\ref{lst:11triangs_1} and~\ref{lst:11triangs_2}.
The idea is to start with a Tutte embedding 
and then use global optimization heuristics\footnote{Cf.\ \url{http://docs.scipy.org/doc/scipy/reference/generated/scipy.optimize.minimize.html}} 
to optimize a certain quality function, 
which simultaneously maximizes edge lengths and vertex-edge distances.

\subsection{Testing $n$-Universality}

\paragraph*{test\_universal\_sets}
We provide a C++ program \verb|test_universal_sets| to find $n$-universal point sets.
The program (see the folder \verb|cprogram/scripts/test_universal_sets/|)
can be built analogously to \verb|extend_order_type| (using qmake and make), except that
Minisat is required to be build as a library first.

\paragraph*{Building the Minisat Library}
As described in the README file delivered with Minisat \cite{EenSorensen2003}\footnote{Available from \url{http://minisat.se/MiniSat.html}}
(cf.~\verb|cprogram/minisat-2.2.0|),
one needs to set the MROOT variable. 
This can be done for example with the following command:
\begin{verbatim}
$ export MROOT=$PWD
\end{verbatim}
In the \verb|simp| folder from Minisat, one then can run 
\begin{verbatim}
$ make lib_release.a 
\end{verbatim}
to build the library. 
Having the \verb|lib_release.a| built, we are now ready to build our program \verb|test_universal_sets|.
Note that, if minisat-2.2.0 is not placed inside the basis directory (where the \verb|otlib.pro| is located), 
one might need to slightly adapt the project file \verb|test_universal_sets.pro| so 
that the minisat headers and library are found. 
In particular, the following two lines might need to be adapted:
\begin{verbatim}
INCLUDEPATH += $$OTLIBDIR/minisat-2.2.0
LIBS += $$OTLIBDIR/minisat-2.2.0/simp/lib_release.a
\end{verbatim}

\paragraph*{Usage of the Program}
\begin{verbatim}
./test_universal_sets [n] [order types file] [graphs file] [phase]
                      [parts] [from part] [to part]
\end{verbatim}
The parameters can be described as follows:
\begin{itemize}
 \item ``n'' is the number of points in the abstract order types from the input file,
 \item ``order types file'' is the path to the input file for abstract order types,
 \item ``graphs file'' is the path to the input file for graphs,
 \item ``phase'' specifies the actions which should be performed (see below),
 \item ``parts'' is the number of threads in total,
 \item ``from part'' is the id of the first thread to be run, and
 \item ``to part'' is the id of the first thread not to be run.
\end{itemize}
Abstract order types are again encoded by their small lambda matrix as described above.
For the graph file we have chosen the plain-text edge-list format described above.
The phase parameter describes, what the program should test:
\begin{itemize}
\item 
If the program is ran with parameter ``phase=1'', 
then point sets are filtered out which do not fulfill 
the necessary conditions described in Phase~1 of Section~\ref{sec:proof_conflict_11}.

\item 
If the program is ran with parameter ``phase=2'', 
then for each point set all graphs from the list are tested for simultaneous embeddability.
As described in Phase~2 of Section~\ref{sec:proof_conflict_11},
we stop as soon as one graph is not embeddable, 
and we use a priority queue to speedup the computations.

The parameter ``phase=2'' is also used to test Phase~4 of Section~\ref{sec:proof_conflict_11}.

\item 
If the program is ran with parameter ``phase=3'',  
then all pairs of order types and graphs are tested for embeddability.
Unlike for the computations of Phase~2, 
we do not change the order of the list of graphs.
For each given order type from the input file, 
a line of zeros and ones is written to a plain-text output file,
where the $j$-th zero/one in the $i$-th line encodes 
whether the $j$-th graph can be embedded on the $i$-th point set.
In the following we refer to this file as ``stat''-file.

The parameter ``phase=3''  is also used to test Phase~5 of Section~\ref{sec:proof_conflict_11}.
\end{itemize}

Note that for Phase~6 of Section~\ref{sec:proof_conflict_11} 
one can simply concatenate the stat-files obtained in Phases~3 and~5, and run CPLEX/Gurobi 
-- no additional computations with our C++ program are necessary.

\paragraph*{Loading Realizations from the Order Type Database}

It is also possible to load files from the order type database \cite{AichholzerOTDB}, 
which provide point set realizations of all order types.
To do so, one simply needs to change the line
\begin{verbatim}
// #define REALIZATIONS
\end{verbatim}
to
\begin{verbatim}
#define REALIZATIONS
\end{verbatim}
in the source file \verb|test_universal_sets.cpp|.
Note that 
in the binary files \verb|otypes04.b08|, \ldots, \verb|otypes08.b08| (available at \cite{AichholzerOTDB})
each order type of $n = 4,\ldots, 8$ points is encoded by one of its realizing point sets: 
the points $(x_1,y_1),\ldots,(x_n,y_n)$
are encoded as ``$x_1 y_1 \ldots x_n y_n$'' using 1 byte per coordinate (values inbetween 0 and 255).
For $n=9$, $n=10$, and $n=11$, 
each coordinate is encoded using 2 bytes  (values inbetween 0 and 65536).

\subsection{Integer Programming}

\paragraph*{Gurobi}
Having Gurobi/Gurobipy  installed 
\cite{gurobi}\footnote{Cf.\ Section 12 ``Python Interface'' 
from \url{http://www.gurobi.com/documentation/8.1/quickstart_linux.pdf}}\footnote{For information on free academic versions, 
checkout \url{http://www.gurobi.com/academia/academia-center}},
we used the script \verb|test_min_cover.py| to create 
a (Mixed) Integer Linear Programming instance
from a stat-file (created from our C++ program).
The script parses the input file, 
writes the instance to an ``lp'' file,
and then starts the Gurobi solver to find an (optimal) solution.
An instance, which is stored in an lp-file, 
can also be read and solved on a different machine for example via the following command:
\begin{verbatim}
$ gurobi
...
gurobi> m=read("n11_phase5_statistic_stacked.txt.instance.lp")
gurobi> m.optimize()
Optimize a model with 17533 rows, 423 columns and 1031205 nonzeros
...
Explored 295 nodes (25169 simplex iterations) in 69.54 seconds
Thread count was 6 (of 6 available processors)

Solution count 5: 12 13 14 ... 50

Optimal solution found (tolerance 1.00e-04)
Best objective 1.200000000000e+01, best bound 1.200000000000e+01, gap 0.0000%
\end{verbatim}

It is worth to note that, when solving an instance using Gurobi (also with CPLEX), 
the current upper and lower bound on the optimal value is printed to the console every few seconds.
Moreover, when aborting the solving process (CTRL+C), the currently best solution is printed.

\paragraph*{CPLEX}

An instance, which is stored in an lp-file, can be read and solved
via the CPLEX Interactive Optimizer \cite{IBMstudio}\footnote{
For information on free academic versions, 
checkout \url{http://www.ibm.com/developerworks/community/blogs/jfp/entry/CPLEX_Is_Free_For_Students}
} as exemplified in the following:

\begin{verbatim}
CPLEX> read instance.lp
...
CPLEX> optimize
...
\end{verbatim}

\section{The Conflict Collection}
\label{sec:conflict_collection}
\begin{lstlisting}[
caption={Edge-lists of the 11 stacked triangulations from collection~$\GG$ obtained in Phase~3.},
captionpos=b,label=lst:11triangs_1,basicstyle=\normalsize,breaklines=true]
G_1 = [(0,1),(0,2),(0,3),(0,4),(0,5),(0,6),(0,7),(0,8),(1,2),(1,7),(1,8),(1,9),(2,3),(2,5),(2,6),(2,7),(2,9),(2,10),(3,4),(3,5),(3,10),(4,5),(5,6),(5,10),(6,7),(7,8),(7,9)]

G_2 = [(0,1),(0,2),(0,3),(0,4),(0,5),(0,6),(0,7),(0,8),(1,2),(1,7),(1,8),(1,9),(2,3),(2,6),(2,7),(2,9),(3,4),(3,5),(3,6),(3,10),(4,5),(5,6),(5,10),(6,7),(6,10),(7,8),(7,9)]

G_3 = [(0,1),(0,2),(0,3),(0,4),(0,5),(0,6),(0,7),(0,8),(1,2),(1,7),(1,8),(2,3),(2,6),(2,7),(2,9),(2,10),(3,4),(3,5),(3,6),(3,10),(4,5),(5,6),(6,7),(6,9),(6,10),(7,8),(7,9)]

G_4 = [(0,1),(0,2),(0,3),(0,4),(0,5),(0,6),(0,7),(0,8),(0,9),(1,2),(1,7),(1,9),(2,3),(2,6),(2,7),(2,10),(3,4),(3,6),(4,5),(4,6),(5,6),(6,7),(6,10),(7,8),(7,9),(7,10),(8,9)]

G_5 = [(0,1),(0,2),(0,3),(0,4),(0,5),(0,6),(0,7),(0,8),(1,2),(1,3),(1,4),(1,8),(1,9),(1,10),(2,3),(3,4),(4,5),(4,8),(4,10),(5,6),(5,8),(6,7),(6,8),(7,8),(8,9),(8,10),(9,10)]

G_6 = [(0,1),(0,2),(0,3),(0,4),(0,5),(0,6),(0,7),(0,8),(1,2),(1,8),(2,3),(2,4),(2,5),(2,8),(3,4),(4,5),(5,6),(5,8),(5,9),(6,7),(6,8),(6,9),(6,10),(7,8),(8,9),(8,10),(9,10)]

G_7 = [(0,1),(0,2),(0,3),(0,4),(0,5),(0,6),(0,7),(0,8),(1,2),(1,4),(1,5),(1,7),(1,8),(1,9),(1,10),(2,3),(2,4),(3,4),(4,5),(4,10),(5,6),(5,7),(5,10),(6,7),(7,8),(7,9),(8,9)]

G_8 = [(0,1),(0,2),(0,3),(0,4),(0,5),(0,6),(0,7),(0,8),(1,2),(1,4),(1,5),(1,7),(1,8),(1,9),(2,3),(2,4),(3,4),(4,5),(4,9),(5,6),(5,7),(5,9),(5,10),(6,7),(6,10),(7,8),(7,10)]

G_9 = [(0,1),(0,2),(0,3),(0,4),(0,5),(0,6),(0,7),(0,8),(1,2),(1,3),(1,5),(1,7),(1,8),(1,9),(2,3),(2,9),(3,4),(3,5),(3,9),(3,10),(4,5),(4,10),(5,6),(5,7),(5,10),(6,7),(7,8)]

G_10 = [(0,1),(0,2),(0,3),(0,4),(0,5),(0,6),(0,7),(0,8),(1,2),(1,3),(1,5),(1,7),(1,8),(1,9),(1,10),(2,3),(3,4),(3,5),(3,10),(4,5),(5,6),(5,7),(5,9),(5,10),(6,7),(7,8),(7,9)]

G_11 = [(0,1),(0,2),(0,3),(0,4),(0,5),(0,6),(0,7),(0,8),(1,2),(1,3),(1,4),(1,7),(1,8),(1,9),(2,3),(3,4),(4,5),(4,6),(4,7),(5,6),(6,7),(7,8),(7,9),(7,10),(8,9),(8,10),(9,10)]

G_12 = [(0,1),(0,2),(0,3),(0,4),(0,5),(0,6),(0,7),(0,8),(1,2),(1,4),(1,5),(1,6),(1,8),(2,3),(2,4),(2,9),(3,4),(3,9),(4,5),(4,9),(5,6),(6,7),(6,8),(6,10),(7,8),(7,10),(8,10)]

G_13 = [(0,1),(0,2),(0,3),(0,4),(0,5),(0,6),(0,7),(0,8),(1,2),(1,3),(1,5),(1,8),(1,9),(2,3),(3,4),(3,5),(3,9),(4,5),(5,6),(5,8),(5,9),(5,10),(6,7),(6,8),(6,10),(7,8),(8,10)]

G_14 = [(0,1),(0,2),(0,3),(0,4),(0,5),(0,6),(0,7),(0,8),(1,2),(1,4),(1,5),(1,7),(1,8),(1,9),(2,3),(2,4),(2,10),(3,4),(3,10),(4,5),(4,10),(5,6),(5,7),(5,9),(6,7),(7,8),(7,9)]

G_15 = [(0,1),(0,2),(0,3),(0,4),(0,5),(0,6),(0,7),(1,2),(1,5),(1,7),(1,8),(2,3),(2,4),(2,5),(3,4),(4,5),(5,6),(5,7),(5,8),(5,9),(6,7),(7,8),(7,9),(7,10),(8,9),(8,10),(9,10)]

G_16 = [(0,1),(0,2),(0,3),(0,4),(0,5),(0,6),(1,2),(1,6),(2,3),(2,6),(3,4),(3,5),(3,6),(3,7),(4,5),(4,7),(4,8),(4,9),(4,10),(5,6),(5,7),(5,8),(5,9),(5,10),(7,8),(8,9),(9,10)]

G_17 = [(0,1),(0,2),(0,3),(0,4),(0,5),(0,6),(0,7),(0,8),(1,2),(1,3),(1,5),(1,7),(1,8),(1,9),(2,3),(3,4),(3,5),(3,10),(4,5),(4,10),(5,6),(5,7),(5,10),(6,7),(7,8),(7,9),(8,9)]

G_18 = [(0,1),(0,2),(0,3),(0,4),(0,5),(0,6),(0,7),(0,8),(0,9),(1,2),(1,8),(1,9),(2,3),(2,8),(3,4),(3,8),(4,5),(4,8),(5,6),(5,7),(5,8),(5,10),(6,7),(7,8),(7,10),(8,9),(8,10)]

G_19 = [(0,1),(0,2),(0,3),(0,4),(0,5),(0,6),(0,7),(0,8),(0,9),(1,2),(1,8),(1,9),(2,3),(2,4),(2,5),(2,8),(2,10),(3,4),(4,5),(5,6),(5,7),(5,8),(5,10),(6,7),(7,8),(8,9),(8,10)]

G_20 = [(0,1),(0,2),(0,3),(0,4),(0,5),(0,6),(0,7),(1,2),(1,3),(1,4),(1,7),(1,8),(2,3),(2,8),(2,9),(3,4),(3,8),(3,9),(3,10),(4,5),(4,7),(5,6),(5,7),(6,7),(8,9),(8,10),(9,10)]

G_21 = [(0,1),(0,2),(0,3),(0,4),(0,5),(0,6),(1,2),(1,6),(1,7),(1,8),(1,9),(2,3),(2,6),(2,7),(2,9),(2,10),(3,4),(3,5),(3,6),(4,5),(5,6),(6,7),(7,8),(7,9),(7,10),(8,9),(9,10)]

G_22 = [(0,1),(0,2),(0,3),(0,4),(0,5),(0,6),(0,7),(0,8),(1,2),(1,6),(1,8),(2,3),(2,4),(2,5),(2,6),(2,9),(2,10),(3,4),(4,5),(5,6),(5,10),(6,7),(6,8),(6,9),(6,10),(7,8),(9,10)]

G_23 = [(0,1),(0,2),(0,3),(0,4),(0,5),(0,6),(0,7),(0,8),(1,2),(1,8),(2,3),(2,4),(2,5),(2,8),(2,9),(2,10),(3,4),(4,5),(5,6),(5,8),(5,10),(6,7),(6,8),(7,8),(8,9),(8,10),(9,10)]

G_24 = [(0,1),(0,2),(0,3),(0,4),(0,5),(0,6),(0,7),(0,8),(0,9),(1,2),(1,6),(1,8),(1,9),(1,10),(2,3),(2,4),(2,6),(2,10),(3,4),(4,5),(4,6),(5,6),(6,7),(6,8),(6,10),(7,8),(8,9)]

G_25 = [(0,1),(0,2),(0,3),(0,4),(0,5),(0,6),(0,7),(0,8),(1,2),(1,8),(1,9),(2,3),(2,4),(2,5),(2,8),(2,9),(2,10),(3,4),(4,5),(5,6),(5,8),(5,10),(6,7),(6,8),(7,8),(8,9),(8,10)]

G_26 = [(0,1),(0,2),(0,3),(0,4),(0,5),(0,6),(0,7),(0,8),(0,9),(1,2),(1,4),(1,9),(2,3),(2,4),(3,4),(4,5),(4,6),(4,7),(4,9),(4,10),(5,6),(6,7),(7,8),(7,9),(7,10),(8,9),(9,10)]

G_27 = [(0,1),(0,2),(0,3),(0,4),(0,5),(0,6),(0,7),(0,8),(1,2),(1,3),(1,4),(1,8),(1,9),(2,3),(3,4),(4,5),(4,6),(4,7),(4,8),(4,9),(4,10),(5,6),(6,7),(7,8),(8,9),(8,10),(9,10)]
\end{lstlisting}


\begin{lstlisting}[
caption={Edge-lists of the 12 stacked triangulations from collection~$\HH$ obtained in Phase~5.},
captionpos=b,label=lst:11triangs_2,basicstyle=\normalsize,breaklines=true]
H_1 = [(0,1),(0,2),(0,3),(0,4),(0,5),(0,6),(0,7),(0,8),(0,9),(1,2),(1,6),(1,9),(1,10),(2,3),(2,6),(2,10),(3,4),(3,5),(3,6),(4,5),(5,6),(6,7),(6,9),(6,10),(7,8),(7,9),(8,9)]

H_2 = [(0,1),(0,2),(0,3),(0,4),(0,5),(0,6),(0,7),(1,2),(1,3),(1,4),(1,5),(1,7),(1,8),(2,3),(3,4),(4,5),(5,6),(5,7),(5,8),(5,9),(5,10),(6,7),(6,10),(7,8),(7,9),(7,10),(8,9)]

H_3 = [(0,1),(0,2),(0,3),(0,4),(0,5),(0,6),(0,7),(0,8),(0,9),(0,10),(1,2),(1,3),(1,8),(1,10),(2,3),(3,4),(3,5),(3,8),(4,5),(5,6),(5,8),(6,7),(6,8),(7,8),(8,9),(8,10),(9,10)]

H_4 = [(0,1),(0,2),(0,3),(0,4),(0,5),(0,6),(0,7),(0,8),(0,9),(0,10),(1,2),(1,3),(1,4),(1,10),(2,3),(3,4),(4,5),(4,7),(4,10),(5,6),(5,7),(6,7),(7,8),(7,9),(7,10),(8,9),(9,10)]

H_5 = [(0,1),(0,2),(0,3),(0,4),(0,5),(0,6),(0,7),(0,8),(0,9),(0,10),(1,2),(1,6),(1,7),(1,9),(1,10),(2,3),(2,4),(2,6),(3,4),(4,5),(4,6),(5,6),(6,7),(7,8),(7,9),(8,9),(9,10)]

H_6 = [(0,1),(0,2),(0,3),(0,4),(0,5),(0,6),(0,7),(0,8),(0,9),(1,2),(1,4),(1,9),(1,10),(2,3),(2,4),(3,4),(4,5),(4,6),(4,7),(4,9),(4,10),(5,6),(6,7),(7,8),(7,9),(8,9),(9,10)]

H_7 = [(0,1),(0,2),(0,3),(0,4),(0,5),(0,6),(0,7),(0,8),(0,9),(0,10),(1,2),(1,6),(1,7),(1,8),(1,10),(2,3),(2,6),(3,4),(3,5),(3,6),(4,5),(5,6),(6,7),(7,8),(8,9),(8,10),(9,10)]

H_8 = [(0,1),(0,2),(0,3),(0,4),(0,5),(0,6),(0,7),(0,8),(0,9),(0,10),(1,2),(1,3),(1,10),(2,3),(3,4),(3,5),(3,6),(3,7),(3,8),(3,9),(3,10),(4,5),(5,6),(6,7),(7,8),(8,9),(9,10)]

H_9 = [(0,1),(0,2),(0,3),(0,4),(0,5),(0,6),(0,7),(0,8),(0,9),(0,10),(1,2),(1,5),(1,6),(1,10),(2,3),(2,4),(2,5),(3,4),(4,5),(5,6),(6,7),(6,9),(6,10),(7,8),(7,9),(8,9),(9,10)]

H_10 = [(0,1),(0,2),(0,3),(0,4),(0,5),(0,6),(0,7),(0,8),(0,9),(0,10),(1,2),(1,9),(1,10),(2,3),(2,4),(2,5),(2,8),(2,9),(3,4),(4,5),(5,6),(5,8),(6,7),(6,8),(7,8),(8,9),(9,10)]

H_11 = [(0,1),(0,2),(0,3),(0,4),(0,5),(0,6),(0,7),(0,8),(0,9),(0,10),(1,2),(1,5),(1,8),(1,9),(1,10),(2,3),(2,4),(2,5),(3,4),(4,5),(5,6),(5,8),(6,7),(6,8),(7,8),(8,9),(9,10)]

H_12 = [(0,1),(0,2),(0,3),(0,4),(0,5),(0,6),(0,7),(0,8),(0,9),(1,2),(1,4),(1,5),(1,7),(1,8),(1,9),(2,3),(2,4),(3,4),(4,5),(5,6),(5,7),(5,10),(6,7),(6,10),(7,8),(7,10),(8,9)]

H_13 = [(0,1),(0,2),(0,3),(0,4),(0,5),(0,6),(0,7),(0,8),(1,2),(1,3),(1,4),(1,7),(1,8),(2,3),(3,4),(4,5),(4,6),(4,7),(4,9),(4,10),(5,6),(5,10),(6,7),(6,9),(6,10),(7,8),(7,9)]

H_14 = [(0,1),(0,2),(0,3),(0,4),(0,5),(0,6),(0,7),(0,8),(0,9),(0,10),(1,2),(1,3),(1,5),(1,7),(1,9),(1,10),(2,3),(3,4),(3,5),(4,5),(5,6),(5,7),(6,7),(7,8),(7,9),(8,9),(9,10)]

H_15 = [(0,1),(0,2),(0,3),(0,4),(0,5),(0,6),(0,7),(0,8),(0,9),(0,10),(1,2),(1,10),(2,3),(2,10),(3,4),(3,5),(3,10),(4,5),(5,6),(5,7),(5,8),(5,10),(6,7),(7,8),(8,9),(8,10),(9,10)]

H_16 = [(0,1),(0,2),(0,3),(0,4),(0,5),(0,6),(0,7),(0,8),(0,9),(0,10),(1,2),(1,10),(2,3),(2,10),(3,4),(3,6),(3,10),(4,5),(4,6),(5,6),(6,7),(6,8),(6,10),(7,8),(8,9),(8,10),(9,10)]

H_17 = [(0,1),(0,2),(0,3),(0,4),(0,5),(0,6),(0,7),(0,8),(0,9),(0,10),(1,2),(1,3),(1,4),(1,5),(1,6),(1,10),(2,3),(3,4),(4,5),(5,6),(6,7),(6,8),(6,9),(6,10),(7,8),(8,9),(9,10)]

H_18 = [(0,1),(0,2),(0,3),(0,4),(0,5),(0,6),(0,7),(0,8),(1,2),(1,3),(1,6),(1,8),(1,9),(2,3),(3,4),(3,5),(3,6),(3,9),(3,10),(4,5),(5,6),(5,10),(6,7),(6,8),(6,9),(6,10),(7,8)]

H_19 = [(0,1),(0,2),(0,3),(0,4),(0,5),(0,6),(0,7),(0,8),(1,2),(1,5),(1,6),(1,7),(1,8),(1,9),(2,3),(2,4),(2,5),(3,4),(4,5),(5,6),(6,7),(7,8),(7,9),(7,10),(8,9),(8,10),(9,10)]

H_20 = [(0,1),(0,2),(0,3),(0,4),(0,5),(0,6),(0,7),(0,8),(0,9),(0,10),(1,2),(1,4),(1,6),(1,8),(1,10),(2,3),(2,4),(3,4),(4,5),(4,6),(5,6),(6,7),(6,8),(7,8),(8,9),(8,10),(9,10)]

H_21 = [(0,1),(0,2),(0,3),(0,4),(0,5),(0,6),(0,7),(0,8),(1,2),(1,5),(1,6),(1,8),(1,9),(2,3),(2,4),(2,5),(2,10),(3,4),(3,10),(4,5),(4,10),(5,6),(6,7),(6,8),(6,9),(7,8),(8,9)]

H_22 = [(0,1),(0,2),(0,3),(0,4),(0,5),(0,6),(0,7),(0,8),(1,2),(1,4),(1,5),(1,7),(1,8),(2,3),(2,4),(2,9),(2,10),(3,4),(3,10),(4,5),(4,9),(4,10),(5,6),(5,7),(6,7),(7,8),(9,10)]
\end{lstlisting}

\section{Discussion}
\label{sec:discussion}

In Section~\ref{sec:proof_theorem_lower129}, we provided an improved lower bound for $f_p(n)$ and $f_s(n)$.
However, the best known general upper bounds remain far from linear.

\medskip

In Section~\ref{sec:proof_conflict_11}, 
we have applied the ideas from Phases~2 and~3 twice (cf. Phases~4 and~5)
to reduce the size of a conflict collection.
One could further proceed with this strategy to find even smaller conflict collections (if such exist).
Also one could simply test 
whether all elements from the conflict collection are indeed necessary,
or whether certain elements can be removed.
Note that, to compute a minimal conflict collection for $n=11$, one could theoretically
check which graphs admit an embedding on which point set and then find a minimal set cover 
as described in Phase~3 (Section~\ref{sec:proof_conflict_11}).
In practice, however, formulating such a minimal set cover instance (as integer program) is not reasonable
because testing the embeddability of every graph in every point set would be an extremely time consuming task.
(Recall that we used a priority queue to speed up our computation, so only a few pairs were actually tested.
Also recall that, to generate the set cover instances, 
we only looked at a comparably small number of order types.)
And even if such an instance was formulated, 
due to its size, the IP/set cover might not be solvable optimally in reasonable time.

\medskip

Besides the computations for $n=11$ points,
we also adapted our program to
find all $n$-universal order types on $n$ points for every $n \le 10$, and hence could
verify the results from \cite[Table~1]{CardinalHoffmannKusters2015}.
To be precise, we found 5,956 9-universal abstract order types on $n=9$ points,
whereas only 5,955 are realizable as point sets.
It is worth to note that 
there is exactly one non-realizable abstract order type on 9 points in the projective plane, 
which is dual to the simple non-Pappus arrangement,
and that all abstract order types on $n \le 8$ points are realizable. 
Besides the already known 2,072 realizable order types on 10 points,
no further non-realizable 10-universal abstract order types were found.
For more details on realizability see for example~\cite{Krasser2003} or~\cite{FelsnerGoodman2016}.

Unfortunately, we do not have an argument for subsets/supersets of $n$-universal point sets,
and thus the question for $n=12,13,14$ remains open.
However, based on computational evidence (see also \cite[Table~1]{CardinalHoffmannKusters2015}), 
we strongly conjecture that no $n$-universal set of $n$ points exists for $n \ge 11$.

As mentioned in the introduction of this paper,
various graph classes have been studied for this problem.
Even though our contribution on 4-connected planar graphs in Section~\ref{sec:4connected} is rather small,
it gives some evidence that comparably less points are needed to embed 4-connected planar graphs.
In fact, we would not be surprised if $n$-universal sets of $n$ points exist for 4-connected planar graphs.

Last but not least, 
we want to mention that 
universal sets indeed must have a very special structure:
It is not hard to see that, for embedding nested triangulations on a grid,
at least $\Omega(n) \times \Omega(n)$ points are required.
Quite recently, this bound was generalized to ``random'' point sets 
(points chosen uniformly and independently at random from the unit-square) \cite{ChoiChrobakCostello2019}. Therefore the probabilistic method in its basic form will not succeed in proving subquadratic upper bounds on $f_p(n)$.

\subsection{The Certifying SAT Model and the Bug}
\label{sec:thebug}

When we started investigating universal point sets,
we first formulated a SAT instance
to find an abstract order type on $n$ points  
which is $n$-universal (cf.\ \verb|sat_test.sage|).
For $ n\le 10$, the solver almost instantly found an $n$-universal set of $n$ points,
however, for $n = 11$ the program did not terminate.
Therefore, we had to come up with a slightly more complicated procedure involving some C++ code 
(cf.\ Section~\ref{ssec:QuantitiveApproach}).

When preparing this full version,
we modified the original SAT instance 
to only test the ``conflict set'' of 23 stacked triangulations 
from earlier versions of this paper \cite{3S_eurocg_version,3S_gd_version},
so that we have an independent computer-proof verifying the correctness.
However, the SAT solver almost instantly found an abstract order type 
which was universal for that ``conflict set''.

We located and fixed a small bug in the C++ source,
and after re-running all computations, 
we ended up with the conflict set of 49 stacked triangulations.
An independent SAT model to verify this result provides good evidence, 
as it did not manage to produce a solution to the SAT instance within a reasonable amount of time (we had the program running for several weeks).

\section*{Acknowledgements}

Manfred Scheucher was supported by DFG Grant FE~340/12-1.
Hendrik Schrezenmaier was supported by DFG Grant FE-340/11-1.
Raphael Steiner was supported by DFG-GRK 2434.

\bibliographystyle{alphaabbrv-url}
\bibliography{refs}

\end{document}